\numberwithin{equation}{section}
\newtheorem{thm}{Theorem}[section]
\newtheorem{prop}[thm]{Proposition}
\newtheorem{rem}{Remark}[section]
\newcommand\s{\epsilon}
\def\dsum{\displaystyle\sum}
\def\dlim{\displaystyle\lim}
\def\dint{\displaystyle\int}
\begin{document}

\title{Sticky Brownian motions and a probabilistic solution to a two-point boundary value problem}

\author{Thu Dang Thien Nguyen}


\maketitle

\begin{abstract}
{In this paper, we study a two-point boundary value problem consisting of the heat equation on the open interval $(0,1)$ with boundary conditions which relate first and second spatial derivatives at the boundary points. Moreover, the unique solution to this problem can be represented probabilistically in terms of a sticky Brownian motion. This probabilistic representation is attained from the stochastic differential equation for a sticky Brownian motion on the bounded interval $[0,1]$.
}
\end{abstract}

Keywords: Two-point boundary value problem; Sticky Brownian motions.
\maketitle

\section{Introduction}
Let $u_0: [0,1]\to [0,1]$. We try to look for a bounded solution $u\in C^{2,1}([0,1]\times (0,\infty))$ solving the following problem
\begin{align}
 u_t =  \frac 1{2} u_{rr},&\quad\quad \dlim_{t\downarrow 0}u(r,t) =u_0(r), \text{ for } r\in (0,1),\label{ep1}\\
\dfrac{d}{dt}u(0,t)=\dfrac{1}{2}u_r(0,t), &\quad\quad \dfrac{d}{dt}u(1,t)=-\dfrac{1}{2}u_r(1,t)\label{ep2}.
\end{align}

Since it is shown in Chapter 6 and Chapter 7 of \cite{Kt} that there exists a Markov process $\mathbf{B}$ having a generator $A=\frac{1}{2}d^2/dx$ on $(0,1)$ with extension by continuity to the points $0$ and $1$ and restriction to the domain
$$ \mathcal{D}(A)=\{f\in C^2([0,1]), f''(x)+(-1)^{1-x}f'(x)=0 \text{ for } x\in \{0,1\}\}, $$
then if $u_0\in C^2([0,1])$, one can verify that the unique bounded solution to the above problem is given probabilistically, for $(r,t)\in [0,1]\times (0.\infty)$, by
$$ u(r,t)=E_r\big[u_0(\mathbf{B}(t))\big], $$
where $E_r$ stands for the expectation with respect to the process $\mathbf{B}$ starting from $r$.
 
However, when the continuity at the boundary points $0$ and $1$ of the initial datum $u_0$ is relaxed, let us consider the heat equation along with the initial datum \eqref{ep1} and the Dirichlet boundary conditions
\begin{equation}\label{Dir}
u(0,t)=v_{0,-}, \quad\quad u(1,t)=v_{0,+} \text{ for } t>0,
\end{equation}
where $v_{0,\pm}\in [0,1]$. This boundary value problem has a unique solution which can be represented by means of a Brownian motion $B$ absorbed at $0$ and $1$ as follows
\begin{equation}\label{abs}
u(r,t)=E_r\big[u_0(B(t))\mathbf{1}_{\tau>t}\big]+v_{0,-}P_r\big(\tau=\tau_0\leq t\big)+v_{0,+}P_r\big(\tau=\tau_1\leq t\big),
\end{equation}
where $\tau_a$ is the first time when the Brownian motion $B$ hits $a$ and $\tau=\tau_0\wedge\tau_1$. Since $B$ is absorbed whenever it reaches $0$ and $1$, we are just interested in the boundary conditions of the form \eqref{Dir}.

In \cite{PS}, Pang and Stroock investigate the existence of the solution to the heat equation with the initial datum \eqref{ep1} and the boundary conditions \eqref{ep2} under the assumption on the discontinuity of the initial datum $u_0$ at the boundaries. For the boundary conditions \eqref{ep2}, we are concerned about the mass flux at each boundary. The solution in $C^{2,1}([0,1]\times (0,\infty))$ established through a Brownian motion sticky at $0$ and $1$ is unique if it satisfies further that
\begin{equation}\label{ep3}
\dlim_{t\downarrow 0}u(0,t) = v_{0,-}, \quad\quad \dlim_{t\downarrow 0}u(1,t) = v_{0,+}.
\end{equation}
The reason for the requirement \eqref{ep3} is that the stiky Brownian motion spends a positive amount of time at $0$ and $1$ with positive probability.

Moreover, suppose that at each boundary point $0$ and $1$, there is a reservoir of mass $v_{\pm}(t)\in C[0,\infty)$ changing in time. The boundary conditions \eqref{ep2} describe that the mass flux at $0$ and $1$ equals the mass change in the left and right reservoir respectively. If we identify $v_-(t)=u(0,t)$ and $v_+(t)=u(1,t)$ then it can be checked that
$$ \dint_0^1u(r,t)\,dr+v_-(t)+v_+(t)=\dint_0^1u_0(r)\,dr+v_{0,-}+v_{0,+}, \forall t>0. $$
This implies the conservation of mass.

Now in the current paper, we will examine an analogous boundary value problem but depending on a parameter $\s>0$. The role of $\s$ will be explained later. For any fixed $\s>0$, let us consider the heat equation with the initial datum $u^{\s}_0\in C(0,1)$ taking values in $[0,1]$ as follows
\begin{equation}\label{heat}
u^{\s}_t=\dfrac{1}{2}u^{\s}_{rr},\quad \dlim_{t\downarrow 0} u^{\s}(r,t)=u_0^{\s}(r).
\end{equation}

Let us impose a reservoir of mass $\rho^{\s}_{\pm}(t)\in C[0,\infty)$ at each boundary. We observe that if $u^{\s}(0,\cdot)$ is larger than $\rho^{\s}_-(\cdot)$, the rate of the mass change in the left reservoir increases in time. Moreover, the larger this difference is, the faster the rate increases. We will use the factor $\s^{-1}$ to emphasize this property. Although $\rho^{\s}_-(\cdot)$ may be different from $u^{\s}(0,\cdot)$, the difference between them becomes $0$ as $\s$ goes to $0$. An analogous phenomenon also happens at the boundary point $1$. Then for a parameter $\s>0$, this fact can be described in a rigorous way
\begin{align}
\dfrac{d}{dt}\rho^{\s}_-(t)&=\s^{-1}\big[u^{\s}(0,t)-\rho^{\s}_-(t)\big]\label{dif1}\\
\dfrac{d}{dt}\rho^{\s}_+(t)&=\s^{-1}\big[u^{\s}(1,t)-\rho^{\s}_+(t)\big]\label{dif2}.
\end{align}

Furthermore, since we will look for a solution in $C^{2,1}((0,1)\times (0,\infty))$, the boundary conditions that implies the conservation of mass are expressed in a weak form
\begin{align}
\rho^{\s}_-(t)&=\rho^{\s}_-(0)+\dlim_{l\to 0}\dlim_{t_0\downarrow 0}\dint_{t_0}^t\dfrac{1}{2}u^{\s}_r(l,s)\,ds\label{dif3}\\
\rho^{\s}_+(t)&=\rho^{\s}_+(0)-\dlim_{l\to 1}\dlim_{t_0\downarrow 0}\dint_{t_0}^t\dfrac{1}{2}u^{\s}_r(l,s)\,ds\label{dif4}.
\end{align}

For a fixed $\s>0$, if we require that $u^{\s}(0,\cdot), \,u^{\s}(1,\cdot): [0,\infty)\to [0,1]$ and $u^{\s}(0,\cdot), \,u^{\s}(1,\cdot)\,\in C(0,\infty)$, then there exist unique solutions $u^{\s}\in C^{2,1}((0,1)\times (0,\infty)), \rho^{\s}_{\pm}\in C^1(0,\infty)\cap C([0,\infty))$ to the boundary value problem \eqref{heat}-\eqref{dif4}. The first part of the current paper is devoted to arrive at this result. Next, by the suitably chosen initial data, identifying the unique limits of these solutions as $\s\to 0$ and investigating their regularities lead us to the existence and uniqueness of the solution $u\in C^{2,1}([0,1]\times (0,\infty))$ satisfying \eqref{ep1}-\eqref{ep3}.

Moreover, in the second part of this paper, based on the fact that a sticky Brownian motion on the half line $[0, \infty)$ solves a stochastic differential equation as verified in \cite{EP}, we will give an analogous characterization for a sticky Brownian motion on the bounded interval $[0,1]$. This allows us to represent the unique solution to the two-point boundary value problem \eqref{ep1}-\eqref{ep3} probabilistically in terms of a sticky Brownian motion by applying Ito's formula.
\begin{rem}\label{resc}
In \cite{N}, we consider a system of particles moving according to the simple symmetric exclusion process in the channel $[1,N]$ with reservoirs at the boundaries. The reservoirs of size $N$ are also particle systems which can be exchanged with the ones in the channel. The hydrodynamic limit equation we obtain for this particle system is the two-point boundary value problem mentioned above. From the physical point of view, the unique solution to this problem is the limit of a sequence of the one-body correlation functions for an appropriately constructed interacting particle system. Furthermore, by duality technique, one can also express the correlation function in terms of a sticky random walk. Since the convergence of a sequence of rescaled sticky random walks to a sticky Brownian motion can be shown based on the arguments presented in \cite{A}, it leads us to a probabilistic representation of this unique solution.
\end{rem}
\section{A two-point boundary value problem}
Let us set $U:=C^{2,1}((0,1)\times (0,\infty))$ and $H:=C^1(0,\infty)\cap C([0,\infty))$.
\begin{thm}\label{pde}
For any fixed $\s>0$, let $u^{\s}_0\in C(0,1)$ with values in $[0,1]$ and $\rho^{\s}_{\pm}(0)=v^{\s}_{0,\pm}\in [0,1]$. If $u^{\s}(0,\cdot), \,u^{\s}(1,\cdot): [0,\infty)\to [0,1]$ and $u^{\s}(0,\cdot), \,u^{\s}(1,\cdot)\,\in C(0,\infty)$, then there exists a unique $(u^{\s}, \rho^{\s}_-,\rho^{\s}_+)\in U\times H\times H$ which satisfies the following problem 
\begin{align}
u^{\s}_t&=\dfrac{1}{2}u^{\s}_{rr},\quad \dlim_{t\downarrow 0} u^{\s}(r,t)=u_0^{\s}(r)\label{h}\\
\dfrac{d}{dt}\rho^{\s}_-(t)&=\s^{-1}\big[u^{\s}(0,t)-\rho^{\s}_-(t)\big]\label{erv1}\\
\dfrac{d}{dt}\rho^{\s}_+(t)&=\s^{-1}\big[u^{\s}(1,t)-\rho^{\s}_+(t)\big]\label{erv2}\\
\rho^{\s}_-(t)&=\rho^{\s}_-(0)+\dlim_{l\to 0}\dlim_{t_0\downarrow 0}\dint_{t_0}^t\dfrac{1}{2}u^{\s}_r(l,s)\,ds\label{b1}\\
\rho^{\s}_+(t)&=\rho^{\s}_+(0)-\dlim_{l\to 1}\dlim_{t_0\downarrow 0}\dint_{t_0}^t\dfrac{1}{2}u^{\s}_r(l,s)\,ds.\label{b2}
\end{align}
\end{thm}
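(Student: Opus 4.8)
The plan is to exploit the linearity of the system, proving existence and uniqueness separately, and to reserve an energy identity for the uniqueness part since it is the most transparent. First I would recast the coupled constraints as local boundary conditions. Differentiating \eqref{b1} and \eqref{b2} in $t$ identifies the accumulated flux with the reservoir rate, giving $\tfrac12 u^{\s}_r(0^+,t)=\tfrac{d}{dt}\rho^{\s}_-(t)$ and $-\tfrac12 u^{\s}_r(1^-,t)=\tfrac{d}{dt}\rho^{\s}_+(t)$; combined with \eqref{erv1}--\eqref{erv2} this yields the Robin-type conditions
\begin{align*}
\tfrac{1}{2} u^{\s}_r(0^+,t) &= \s^{-1}\big[u^{\s}(0,t)-\rho^{\s}_-(t)\big], \\
-\tfrac{1}{2} u^{\s}_r(1^-,t) &= \s^{-1}\big[u^{\s}(1,t)-\rho^{\s}_+(t)\big].
\end{align*}
Solving the linear ODEs \eqref{erv1}--\eqref{erv2} explicitly,
\[
\rho^{\s}_-(t) = e^{-t/\s}v^{\s}_{0,-} + \s^{-1}\dint_0^t e^{-(t-s)/\s}\,u^{\s}(0,s)\,ds
\]
(and analogously for $\rho^{\s}_+$), lets me eliminate the reservoirs and view the problem as the heat equation on $(0,1)$ with nonlocal Robin conditions whose memory kernel $e^{-(t-s)/\s}$ is smooth and bounded. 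Note that the weights $e^{-t/\s}$ and $\s^{-1}e^{-(t-s)/\s}\,ds$ sum to one, so each $\rho^{\s}_\pm(t)$ is a convex average of $v^{\s}_{0,\pm}$ and of boundary traces. The double limit in \eqref{b1}--\eqref{b2} is then the device that makes the boundary flux integral well defined when $u^{\s}_0$ fails to match the reservoir values at $0$ and $1$; for the solution I construct it can be evaluated directly.

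For existence I would represent $u^{\s}$ as the free heat flow of $u^{\s}_0$ plus single-layer heat potentials with unknown densities $\mu_0,\mu_1$ carried by the two endpoints, built from the Green function of the interval. Imposing the two Robin conditions produces a $2\times 2$ system of Volterra integral equations of the second kind for $(\mu_0,\mu_1)$. The self-interaction kernel is the normal derivative of the single-layer potential, which is weakly singular of order $|t-s|^{-1/2}$, while the cross terms linking the two endpoints are smooth because $0$ and $1$ are separated, and the memory convolution is a bounded perturbation. Weakly singular Volterra systems of the second kind are uniquely solvable on every $[0,T]$ by Picard iteration, the iterated kernels gaining integrability until a genuine contraction appears; this yields continuous densities, hence $u^{\s}\in U$ by interior parabolic regularity of heat potentials and $\rho^{\s}_\pm\in H$ from the ODE representation. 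A comparison argument run jointly for $u^{\s}$ and the reservoirs, using $u^{\s}_0,v^{\s}_{0,\pm}\in[0,1]$ together with the convex-average structure of $\rho^{\s}_\pm$ and the Hopf boundary lemma, confirms that the boundary values stay in $[0,1]$ and are continuous, matching the standing hypotheses.

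For uniqueness I would use an energy identity. Let $(u^i,\rho^i_\pm)$, $i=1,2$, be two solutions, set $w=u^1-u^2$ and $\eta_\pm=\rho^1_\pm-\rho^2_\pm$, so that $w$ solves the homogeneous heat equation with $w(\cdot,0)=0$, $\eta_\pm(0)=0$, and the homogeneous Robin conditions hold. Define
\[
E(t) = \dfrac{1}{2}\dint_0^1 w(r,t)^2\,dr + \dfrac{1}{2}\eta_-(t)^2 + \dfrac{1}{2}\eta_+(t)^2.
\]
Differentiating and integrating by parts, the interior term gives $-\tfrac12\int_0^1 w_r^2\,dr$ and the boundary terms combine with $\eta_\pm\dot\eta_\pm$; at each endpoint the cross terms telescope into a perfect square, so that
\[
\dfrac{d}{dt}E(t) = -\dfrac{1}{2}\dint_0^1 w_r(r,t)^2\,dr - \s^{-1}\big(w(0,t)-\eta_-(t)\big)^2 - \s^{-1}\big(w(1,t)-\eta_+(t)\big)^2 \le 0.
\]
Since $E(0)=0$ and $E\ge0$, this forces $E\equiv0$, whence $w\equiv0$ and $\eta_\pm\equiv0$, giving uniqueness.

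I expect the decisive difficulty to lie in the existence step, specifically the rigorous treatment of the weakly singular Volterra system together with the justification of the boundary-flux limits in \eqref{b1}--\eqref{b2}. Because $u^{\s}_0$ need only be continuous on the open interval, the flux $u^{\s}_r(l,s)$ may be singular as $s\downarrow0$, so I must show the nested limits $\lim_{l\to0}\lim_{t_0\downarrow0}$ actually exist and reproduce $\int_0^t\tfrac12 u^{\s}_r(0^+,s)\,ds$; controlling this behaviour near $t=0$, and matching it with the $C^1$ regularity of $\rho^{\s}_\pm$ up to $t=0$, is where the potential estimates have to be pushed with care. The uniqueness argument, by contrast, is essentially routine once the integration by parts is justified by the $C^{2,1}$ regularity and the existence of continuous boundary traces.
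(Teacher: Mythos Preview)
Your proposal is correct but follows a genuinely different route from the paper. The paper represents $u^\s$ via the Dirichlet formula with the theta kernel,
\[
u^\s(r,t)=\int_0^1 u_0^\s[\Theta(r-r',t)-\Theta(r+r',t)]\,dr'-\int_0^t\partial_r\Theta(r,t-s)\,u^\s(0,s)\,ds+\int_0^t\partial_r\Theta(r-1,t-s)\,u^\s(1,s)\,ds,
\]
so that the unknowns are the boundary traces $u^\s(0,\cdot),u^\s(1,\cdot)$ themselves rather than auxiliary layer densities. Substituting into \eqref{b1}--\eqref{b2} and the integrated ODEs \eqref{erv1}--\eqref{erv2} yields a \emph{first-kind} convolution system with the Abel-type kernel $\Theta(0,t-s)\sim(2\pi(t-s))^{-1/2}$; the paper then performs the classical Abel inversion (multiply by $(x-t)^{-1/2}$, integrate in $t$, pass to the primitives $\psi^\s_\pm(x)=\int_0^x u^\s(0\text{ or }1,s)\,ds$, and integrate by parts) to convert this into a second-kind Volterra system, whose unique solvability in $C(0,T]$ delivers existence and uniqueness simultaneously. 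Your single-layer ansatz instead reaches a second-kind system directly through the jump relation, bypassing the Abel step, and your energy identity gives a separate and structurally cleaner uniqueness proof. The trade-off is that the paper's argument never leaves the weak nested-limit formulation of the flux in \eqref{b1}--\eqref{b2} and so needs no boundary regularity of $u^\s_r$, whereas your energy computation and your differentiation of \eqref{b1}--\eqref{b2} require pointwise existence of $u^\s_r(0^+,t),\,u^\s_r(1^-,t)$; since $U=C^{2,1}((0,1)\times(0,\infty))$ is open in space, this must be recovered a posteriori from your potential representation rather than assumed, a point you correctly flag as the place where care is needed.
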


\vskip 0.3cm
The existence and uniqueness of $(u^{\s}, \rho^{\s}_{\pm})\in U\times H\times H$ can be shown by applying the same technique as presented in Proposition 3.10, \cite{N}, where we aim to arrive at an integral equation and then construct its unique solution inductively by using the contraction mapping theorem.

\begin{proof}
For $(r,t)\in [0,1]\times (0,\infty)$, we denote
$$ \Theta(r,t)=\dsum_{n=-\infty}^{+\infty}\dfrac{1}{\sqrt{2\pi t}}e^{-\frac{(r+2n)^2}{2t}}. $$

Then as a result of Theorem 6.3.1, \cite{C}, for any fixed $\s>0$, $(r,t)\in (0,1)\times (0,\infty)$, the function $u^{\s}\in C^{2,1}((0,1)\times (0,\infty))$ defined by the following expression
\begin{align}\label{u}
 u^{\s}(r,t)=&\dint_0^1u^{\s}_0(r')[\Theta(r-r',t)-\Theta(r+r',t)]\,dr'\notag\\
&-\dint_0^t\dfrac{\partial \Theta}{\partial r}(r,t-s)u^{\s}(0,s)\,ds+\dint_0^t\dfrac{\partial \Theta}{\partial r}(r-1,t-s)u^{\s}(1,s)\,ds
\end{align}
satisfies the linear heat equation \eqref{h} with boundary values $u^{\s}(0,\cdot), \,u^{\s}(1,\cdot)$ and the initial datum $u^{\s}_0$. 

Thus \eqref{b1} and \eqref{b2} give us
\begin{align}
\rho^{\s}_-(t)=v^{\s}_{0,-}+&\dlim_{l\to 0}\dlim_{t_0\downarrow 0}\dint_{t_0}^t\dfrac{1}{2}\dint_0^1u^{\s}_0(r')\Bigg[\dfrac{\partial\Theta}{\partial r}(l-r',s)-\dfrac{\partial\Theta}{\partial r}(l+r',s)\Bigg]dr'ds\notag\\
&-\dint_0^t\Theta(0,t-s)u^{\s}(0,s)\,ds+\dint_0^t\Theta(1,t-s)u^{\s}(1,s)\,ds,\label{bc1}
\end{align}
and similarly,
\begin{align}
\rho^{\s}_+(t)=v^{\s}_{0,+}-&\dlim_{l\to 1}\dlim_{t_0\downarrow 0}\dint_{t_0}^t\dfrac{1}{2}\dint_0^1u^{\s}_0(r')\Bigg[\dfrac{\partial\Theta}{\partial r}(l-r',s)-\dfrac{\partial\Theta}{\partial r}(l+r',s)\Bigg]dr'ds\notag\\
&+\dint_0^t\Theta(1,t-s)u^{\s}(0,s)\,ds-\dint_0^t\Theta(0,t-s)u^{\s}(1,s)\,ds.\label{bc2}
\end{align}

On the other hand, since it follows from \eqref{erv1} and \eqref{erv2} that
\begin{align}
\rho^{\s}_-(t)&=e^{-\s^{-1}(t-t_0)}\rho^{\s}_-(t_0)+\dint_{t_0}^t\s^{-1}e^{-\s^{-1}(t-s)}u^{\s}(0,s)\,ds\label{erv3}\\
\rho^{\s}_+(t)&=e^{-\s^{-1}(t-t_0)}\rho^{\s}_+(t_0)+\dint_{t_0}^t\s^{-1}e^{-\s^{-1}(t-s)}u^{\s}(1,s)\,ds\label{erv4}
\end{align}
then we obtain the following system
\begin{equation}\label{sv}
\begin{cases}
&\dint_0^t\Big[\Theta(0,t-s)+\s^{-1}e^{-\s^{-1}(t-s)}\Big]u^{\s}(0,s)\,ds+\dint_0^t-\Theta(1,t-s)u^{\s}(1,s)\,ds\\
&\hskip6.5cm=f^{\s}_-(t)+v_{0,-}(1-e^{-\s^{-1}t})\\
&\dint_0^t-\Theta(1,t-s)u^{\s}(0,s)\,ds+\dint_0^t\Big[\Theta(0,t-s)+\s^{-1}e^{-\s^{-1}(t-s)}\Big]u^{\s}(1,s)\,ds\\
&\hskip6.5cm=f^{\s}_+(t)+v_{0,+}(1-e^{-\s^{-1}t}),
\end{cases}
\end{equation}
where
$$f^{\s}_-(t)=\dlim_{l\to 0}\dlim_{t_0\downarrow 0}\dint_{t_0}^t\dfrac{1}{2}\dint_0^1u^{\s}_0(r')\bigg[\dfrac{\partial\Theta}{\partial r}(l-r',s)-\dfrac{\partial\Theta}{\partial r}(l+r',s)\bigg]dr'ds,$$
$$f^{\s}_+(t)=-\dlim_{l\to 1}\dlim_{t_0\downarrow 0}\dint_{t_0}^t\dfrac{1}{2}\dint_0^1u^{\s}_0(r')\bigg[\dfrac{\partial\Theta}{\partial r}(l-r',s)-\dfrac{\partial\Theta}{\partial r}(l+r',s)\bigg]dr'ds.$$

Now multiplying both sides of the first equation of the above system by $(x-t)^{-1/2}$ and integrating with respect to $t$ from $0$ to $x$ yield 
\begin{align}\label{sv1}
&\dint_0^x\sqrt{\dfrac{\pi}{2}}u^{\s}(0,s)\,ds\notag\\
+&\dint_0^x\bigg[\dint_0^1\dsum_{n\geq 1}\dfrac{2}{\sqrt{2\pi}}\dfrac{1}{\sqrt{y(1-y)}}e^{-\frac{(2n)^2}{2y(x-s)}}\,dy+\dint_s^x\dfrac{\s^{-1}}{\sqrt{x-t}}e^{-\s^{-1}(t-s)}\,dt\bigg]u^{\s}(0,s)\,ds\notag\\
&\hskip 3cm+\dint_0^x\bigg[\dint_0^1-\dsum_{n\geq 1}\dfrac{2}{\sqrt{2\pi}}\dfrac{1}{\sqrt{y(1-y)}}e^{-\frac{(2n-1)^2}{2y(x-s)}}\,dy\bigg]u^{\s}(1,s)\,ds\notag\\
&\hskip 4.5cm=\dint_0^x\dfrac{1}{\sqrt{x-t}}\Big(f^{\s}_-(t)+v_{0,-}(1-e^{-\s^{-1}t})\Big)\,dt.
\end{align}

If $u^{\s}(0,\cdot), \,u^{\s}(1,\cdot)\,\in C(0,\infty)$ and take values in $[0,1]$ on $[0,\infty)$ then this enables us to define for $x>0$, 
$$ \psi^{\s}_-(x)=\dint_0^xu^{\s}(0,s)\,ds,\quad\quad\quad \psi^{\s}_+(x)=\dint_0^xu^{\s}(1,s)\,ds $$
and thus
$$ \dfrac{d}{dx}\psi^{\s}_-(x)=u^{\s}(0,x), \quad\quad\quad \dfrac{d}{dx}\psi^{\s}_+(x)=u^{\s}(1,x). $$

For $\alpha:=\sqrt{2/\pi}$, applying integration by parts in \eqref{sv1} gives us
\begin{align*}
\psi^{\s}_-(x)=&\dint_0^x\alpha\bigg[\dint_0^1\dsum_{n\geq 1}\dfrac{-2}{\sqrt{2\pi}}\dfrac{1}{\sqrt{y(1-y)}}e^{-\frac{(2n)^2}{2y(x-s)}}\dfrac{(2n)^2}{2y(x-s)^2}\,dy\\
&\hskip 0.5cm-\dfrac{\s^{-1}}{\sqrt{x-s}}+2\s^{-2}\sqrt{x-s}-\dint_s^x2\s^{-3}\sqrt{x-t}\,e^{-\s^{-1}(t-s)}\,dt\bigg]\psi^{\s}_-(s)\,ds\\
&+\dint_0^x\bigg[\dint_0^1\dsum_{n\geq 1}\dfrac{2\alpha}{\sqrt{2\pi}}\dfrac{1}{\sqrt{y(1-y)}}e^{-\frac{(2n-1)^2}{2y(x-s)}}\dfrac{(2n-1)^2}{2y(x-s)^2}\,dy\bigg]\psi^{\s}_+(s)\,ds\\
&+\dint_0^x\dfrac{\alpha}{\sqrt{x-t}}\Big(f^{\s}_-(t)+v_{0,-}(1-e^{-\s^{-1}t})\Big)\,dt.
\end{align*}

Making a similar argument as above for the second equation of the system \eqref{sv} leads us to consider the equation
\begin{equation}\label{Vol}
\begin{bmatrix}
\psi^{\s}_-(x)\\ 
\psi^{\s}_+(x)\\ 
\end{bmatrix}=\begin{bmatrix}
F^{\s}_-(x)\\ 
F^{\s}_+(x)\\ 
\end{bmatrix}+\dint_0^x \begin{bmatrix}
K^{\s}_-(x-s)&K^{\s}_+(x-s)\\ 
K^{\s}_+(x-s)&K^{\s}_-(x-s)\\ 
\end{bmatrix}\begin{bmatrix}
\psi^{\s}_-(s)\\ 
\psi^{\s}_+(s)\\ 
\end{bmatrix}ds,
\end{equation}
where 
\begin{align*}
K^{\s}_-(t)&=\alpha\bigg[\dint_0^1\dsum_{n\geq 1}\dfrac{-2}{\sqrt{2\pi}}\dfrac{1}{\sqrt{y(1-y)}}e^{-\frac{(2n)^2}{2yt}}\dfrac{(2n)^2}{2yt^2}\,dy\\
&\hskip3cm-\dfrac{\s^{-1}}{\sqrt{t}}+2\s^{-2}\sqrt{t}-\dint_0^t2\s^{-3}\sqrt{t-\sigma}\,e^{-\s^{-1}\sigma}\,d\sigma\bigg],\\
K^{\s}_+(t)&=\dint_0^1\dsum_{n\geq 1}\dfrac{-2\alpha}{\sqrt{2\pi}}\dfrac{1}{\sqrt{y(1-y)}}e^{-\frac{(2n-1)^2}{2yt}}\dfrac{(2n-1)^2}{2yt^2}\,dy,\\
F^{\s}_{\pm}(x)&=\dint_0^x\dfrac{\alpha}{\sqrt{x-t}}\Big(f^{\s}_{\pm}(t)+v_{0,\pm}(1-e^{-\s^{-1}t})\Big)\,dt.
\end{align*}

Applying the same technique as introduced in Proposition 3.10, \cite{N}, one can verify the following result.
\begin{prop}\label{Vol1}
The equation \eqref{Vol} has a unique solution $(\psi^{\s}_-,\psi^{\s}_+)\in C(0,T]\times C(0,T]$ for any $T>0$.
\end{prop}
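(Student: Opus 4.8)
The plan is to recognize equation \eqref{Vol} as a system of linear Volterra integral equations of the second kind with convolution-type kernels, and to prove existence and uniqueness by the contraction mapping theorem applied on successively short time intervals, following the inductive scheme of Proposition 3.10 in \cite{N}. First I would fix $T>0$ and introduce the operator $\mathcal{K}$ on $C([0,T])\times C([0,T])$ (or on the relevant weighted space) defined by the right-hand side of \eqref{Vol}, so that a solution is precisely a fixed point $\Psi^{\s}=(\psi^{\s}_-,\psi^{\s}_+)$ of $\Psi^{\s}=\mathcal{K}\Psi^{\s}$. The structural fact that makes this work is that each kernel entry $K^{\s}_{\pm}(t)$ is integrable near $t=0$: although the summands carry factors like $t^{-2}$, the exponentials $e^{-(2n)^2/(2yt)}$ and $e^{-(2n-1)^2/(2yt)}$ decay faster than any power as $t\downarrow 0$, so each series term is bounded and in fact $\dlim_{t\downarrow 0}K^{\s}_{\pm}(t)$ is controlled; the only genuinely singular contribution is the explicit $-\s^{-1}t^{-1/2}$ term in $K^{\s}_-$, which is integrable since $\int_0^\delta t^{-1/2}\,dt<\infty$. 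Thus I would first establish the bound $\int_0^x\big(|K^{\s}_-(t)|+|K^{\s}_+(t)|\big)\,dt\le C(x)$ with $C(x)\to 0$ as $x\downarrow 0$, together with continuity of the inhomogeneous terms $F^{\s}_{\pm}$, which inherit integrability from the same $(x-t)^{-1/2}$ weight appearing in their definition.

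Next I would verify that $\mathcal{K}$ maps $C([0,T])\times C([0,T])$ into itself: continuity of the image follows from dominated convergence using the integrable kernel bounds, and the convolution structure guarantees that $\int_0^x K^{\s}_{\pm}(x-s)\psi^{\s}_{\pm}(s)\,ds$ depends continuously on $x$. To obtain the contraction, I would estimate, for two candidate solutions $\Psi^{\s}$ and $\tilde\Psi^{\s}$,
\begin{equation*}
\big\|\mathcal{K}\Psi^{\s}-\mathcal{K}\tilde\Psi^{\s}\big\|_{\infty,[0,\delta]}\le \Big(\sup_{0<x\le\delta}\int_0^x\big(|K^{\s}_-(t)|+|K^{\s}_+(t)|\big)\,dt\Big)\big\|\Psi^{\s}-\tilde\Psi^{\s}\big\|_{\infty,[0,\delta]},
\end{equation*}
and choose $\delta>0$ small enough that the bracketed quantity is strictly less than $1$. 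By the contraction mapping theorem this yields a unique fixed point on $[0,\delta]$. Since $\delta$ can be taken uniform along the interval (the kernel bound depends only on the length of the integration window, not on the base point, thanks to the convolution form), I would then extend to $[\delta,2\delta]$ by solving the same equation with an updated inhomogeneous term that absorbs the already-determined values of $\psi^{\s}_{\pm}$ on $[0,\delta]$, and iterate finitely many times to cover all of $[0,T]$. Uniqueness on $[0,T]$ follows because any two global solutions must coincide on each successive subinterval by the local uniqueness just established.

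The main obstacle I anticipate is not the abstract fixed-point argument but the careful estimation of the kernels near the origin, specifically controlling the interchange of the series $\dsum_{n\ge 1}$, the $y$-integral over $(0,1)$, and the singular factors $t^{-2}$ and $t^{-1/2}$ simultaneously. One must justify that $\int_0^x\int_0^1\dsum_{n\ge 1}\frac{1}{\sqrt{y(1-y)}}e^{-\frac{(2n)^2}{2yt}}\frac{(2n)^2}{2yt^2}\,dy\,dt$ is finite and small for small $x$; the inner $y$-integral has mild endpoint singularities $y^{-1/2}$ and $(1-y)^{-1/2}$ that are integrable, while the exponential suppression for small $t$ overwhelms the $t^{-2}$ growth and also makes the sum over $n$ converge geometrically. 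I would handle this by the substitution $y(x-s)\mapsto$ a single variable (or by bounding the Gaussian factor uniformly) to reduce each term to a standard integral, and by noting that the heat-kernel tail estimates already implicit in Theorem 6.3.1 of \cite{C} guarantee the requisite decay. Once these kernel estimates are in hand, the remaining steps are routine applications of dominated convergence and the Banach fixed-point theorem, exactly as in \cite{N}.
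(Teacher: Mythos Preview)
Your proposal is correct and follows essentially the same approach as the paper, which does not give a self-contained proof but merely defers to the technique of Proposition~3.10 in \cite{N}: reduce to a Volterra-type integral equation and construct the unique solution inductively via the contraction mapping theorem on successive short intervals. Your identification of the integrable $t^{-1/2}$ singularity in $K^{\s}_-$ as the only genuine obstacle, together with the observation that the exponential factors suppress all other singular-looking terms, is exactly the kernel estimate needed to make the contraction work.
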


As a consequence of the above result, there exists a unique solution $(u^{\s}(0,\cdot), u^{\s}(1,\cdot))$ to the system \eqref{sv} for any $\s>0$ fixed. Then by the expressions \eqref{u}, \eqref{erv3} and \eqref{erv4}, we obtain the unique existence of the solution $(u^{\s}, \rho^{\s}_-,\rho^{\s}_+)\in U\times H\times H$ to our main problem.
\end{proof}

Let us now identify the limits of sequences of functions $u^{\s}, \rho^{\s}_{\pm}$ as $\s$ goes to $0$ up to subsequences. Since the uniqueness of the limits can be verified, we obtain the identification of the limits $u, v_{\pm}$ for the original sequences. Moreover, the boundary conditions can be attained in a strong form in view of the continuous differentiability of $v_{\pm}$. More precisely, the limit $u\in C^{2,1}([0,1]\times (0,\infty))$ is the unique solution to a two-point boundary value problem.
\begin{thm}\label{bvp}
Let $u_0\in C(0,1)$ with values in $[0,1]$ and $v_{0,\pm}\in [0,1]$. There exists a unique $u\in C^{2,1}([0,1]\times (0,\infty))$ which solves the following problem
\begin{equation}\label{lu1}
 u_t =  \frac 1{2} u_{rr},  \quad u(1,t)=v_+(t),  u(0,t)=v_-(t),\quad \lim_{t\downarrow 0}u(r,t) =u_0(r)
 \end{equation}
with $v_{\pm}(t)$ such that for any $t>0$,
\begin{equation}\label{sbc}
\dfrac{d}{dt}v_-(t)=\dfrac{1}{2}u_r(0,t), \quad\quad \dfrac{d}{dt}v_+(t)=-\dfrac{1}{2}u_r(1,t), \quad\quad \dlim_{t\downarrow 0}v_{\pm}(t) = v_{0,\pm}.
\end{equation}
\end{thm}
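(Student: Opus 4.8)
The plan is to obtain $u$ as the $\s\downarrow 0$ limit of the triples $(u^\s,\rho^\s_-,\rho^\s_+)$ furnished by Theorem~\ref{pde}, taking $u^\s_0=u_0$ and $v^\s_{0,\pm}=v_{0,\pm}$ for every $\s$ (these data already lie in the admissible class). First I would record the uniform bounds $0\le u^\s\le 1$ and $0\le\rho^\s_\pm\le 1$, which follow from the maximum principle together with the representation \eqref{u} and the integrated forms \eqref{erv3}--\eqref{erv4}. With this $L^\infty$ control, interior parabolic estimates (obtained directly by differentiating \eqref{u}) bound $u^\s$ and its derivatives uniformly in $\s$ on every compact $K\Subset(0,1)\times(0,\infty)$; Arzel\`a--Ascoli and a diagonal argument then yield a subsequence $\s_n\downarrow 0$ with $u^{\s_n}\to u$ in $C^{2,1}_{\mathrm{loc}}((0,1)\times(0,\infty))$, so that $u_t=\tfrac12 u_{rr}$ holds in the interior.

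The next step identifies the boundary behaviour, and here lies the main obstacle. Differentiating the weak condition \eqref{b1} and comparing with the reservoir equation \eqref{erv1} yields the pointwise identity
\begin{equation*}
u^\s(0,t)-\rho^\s_-(t)=\tfrac{\s}{2}\,u^\s_r(0,t),\qquad u^\s(1,t)-\rho^\s_+(t)=-\tfrac{\s}{2}\,u^\s_r(1,t),
\end{equation*}
so the entire limit hinges on a uniform-in-$\s$ bound $\sup_{t\in[a,b]}\big(|u^\s_r(0,t)|+|u^\s_r(1,t)|\big)\le C(a,b)$ on every interval $[a,b]\subset(0,\infty)$. I expect establishing this boundary-gradient estimate — necessarily up to $r\in\{0,1\}$ and stable as the Robin-type condition above degenerates — to be the hard part; it should follow from the layer-potential structure of \eqref{u} and the Volterra bounds behind Proposition~\ref{Vol1}. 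Granting it, the reservoirs are equicontinuous since $\tfrac{d}{dt}\rho^\s_-=\tfrac12 u^\s_r(0,\cdot)$ is bounded, so along a further subsequence $\rho^{\s_n}_\pm\to v_\pm$ locally uniformly, and the displayed identity forces $u^{\s_n}(0,\cdot)\to v_-$ and $u^{\s_n}(1,\cdot)\to v_+$ as well.

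Passing to the limit in the explicit representation \eqref{u}, with $u_0$ fixed and the boundary densities converging to $v_\pm$, expresses $u$ through the same single- and double-layer heat potentials with densities $v_\pm$; by the classical boundary theory of \cite{C} this $u$ extends continuously to $[0,1]\times(0,\infty)$ with $u(0,t)=v_-(t)$, $u(1,t)=v_+(t)$ and continuous boundary traces of $u_r$. Taking $\s_n\downarrow 0$ in \eqref{b1} then gives $v_-(t)=v_{0,-}+\int_0^t\tfrac12 u_r(0,s)\,ds$, and since $u_r(0,\cdot)$ is now continuous, $v_-\in C^1(0,\infty)$ with $\tfrac{d}{dt}v_-(t)=\tfrac12 u_r(0,t)$; symmetrically $\tfrac{d}{dt}v_+(t)=-\tfrac12 u_r(1,t)$. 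The initial conditions $\dlim_{t\downarrow0}u(r,t)=u_0(r)$ and $\dlim_{t\downarrow0}v_\pm(t)=v_{0,\pm}$ come from the heat-kernel part of \eqref{u} and from $\rho^\s_\pm(0)=v_{0,\pm}$ with the continuity of $v_\pm$ at $0$, so $u$ solves \eqref{lu1}--\eqref{sbc}.

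Finally, uniqueness is proved by an energy estimate independent of $\s$. If $u^{(1)},u^{(2)}$ are two solutions, set $w=u^{(1)}-u^{(2)}$ and $\delta v_\pm=v^{(1)}_\pm-v^{(2)}_\pm$. Using $w_t=\tfrac12 w_{rr}$, integration by parts, and the boundary identities $w(0,t)=\delta v_-$, $\tfrac12 w_r(0,t)=\tfrac{d}{dt}\delta v_-$, $w(1,t)=\delta v_+$, $\tfrac12 w_r(1,t)=-\tfrac{d}{dt}\delta v_+$, one finds
\begin{equation*}
\dfrac{d}{dt}\Big[\tfrac12\dint_0^1 w^2\,dr+\tfrac12(\delta v_-)^2+\tfrac12(\delta v_+)^2\Big]=-\tfrac12\dint_0^1 w_r^2\,dr\le 0.
\end{equation*}
Since all three terms vanish as $t\downarrow 0$, the bracket is identically zero, whence $w\equiv 0$ and $\delta v_\pm\equiv 0$. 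This also shows the limit is independent of the extracted subsequence, so the full families $u^\s,\rho^\s_\pm$ converge, completing the identification of the unique solution.
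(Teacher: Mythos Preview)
Your overall plan parallels the paper's --- pass to the $\s\to 0$ limit of the triples from Theorem~\ref{pde} via compactness --- but your execution hinges on the uniform boundary-gradient bound $\sup_{t\in[a,b]}|u^\s_r(0,t)|\le C(a,b)$, which you explicitly leave unproven. This is a real gap, not a missing detail: Theorem~\ref{pde} only places $u^\s$ in $C^{2,1}((0,1)\times(0,\infty))$, so even the pointwise identity $u^\s(0,t)-\rho^\s_-(t)=\tfrac{\s}{2}\,u^\s_r(0,t)$ you write down is not a priori meaningful, and promoting the weak condition \eqref{b1} to a boundary trace and then bounding that trace uniformly as the Robin parameter degenerates is essentially as hard as the theorem itself.

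The paper sidesteps this entirely. Equicontinuity of $(\rho^\s_\pm)_\s$ is read off directly from the explicit heat-potential formulas \eqref{bc1}--\eqref{bc2}, which yield a Lipschitz bound on $[\delta,T]$ uniform in $\s$ with no reference to $u^\s_r$ at the boundary. The convergence $u^\s(0,t)-\rho^\s_-(t)\to 0$ is obtained not from your identity but from the convolution representation \eqref{erv3}: one writes $\rho^\s_-(t)-u^\s(0,t)=e^{-\s^{-1}t}\big(v_{0,-}-u^\s(0,t)\big)+\int_0^t\s^{-1}e^{-\s^{-1}(t-s)}\big(u^\s(0,s)-u^\s(0,t)\big)\,ds$ and splits the integral near $s=t$ using only continuity of $u^\s(0,\cdot)$. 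These two ingredients are precisely what let the paper bypass the estimate you flagged as hard; the regularity $v_\pm\in C^1(0,\infty)$ needed for the strong form \eqref{sbc} is then established afterwards (the paper defers this to \cite{N}).

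For uniqueness your energy argument is a genuine alternative and is cleaner than the paper's route, which instead derives Volterra equations \eqref{first}--\eqref{second} for $v_\pm$, subtracts two candidate solutions, sums to a scalar equation for $\bar v_-+\bar v_+$, and applies Gronwall. Your Lyapunov functional $\tfrac12\int_0^1 w^2\,dr+\tfrac12(\delta v_-)^2+\tfrac12(\delta v_+)^2$ works because the boundary conditions \eqref{sbc} turn the boundary terms from integration by parts into exactly $\delta v_-\,\tfrac{d}{dt}\delta v_-+\delta v_+\,\tfrac{d}{dt}\delta v_+$; the only care needed is that $\int_0^1 w^2\,dr\to 0$ as $t\downarrow 0$, which follows from the pointwise initial condition, the uniform bound $0\le u^{(i)}\le 1$, and dominated convergence.
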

\begin{proof}
For any fixed $\s>0$, let $(u^{\s}, \rho^{\s}_{\pm})$ be the unique solution obtained in Theorem \ref{pde}. At the initial time, the sequence $u^{\s}_0$ is chosen such that it converges uniformly to $u_0$ on any compact set of $(0,1)$ as $\s\to 0$. Moreover, let us select the sequences $v^{\s}_{0,\pm}$ which converge to $ v_{0,\pm}$, respectively, as $\s\to 0$.

We observe that from \eqref{bc1} and \eqref{bc2}, for any $\delta, T>0$, there exists a constant $C$ such that for any $s,t\in [\delta, T]$,
$$ |\rho^{\s}_{\pm}(t)-\rho^{\s}_{\pm}(s)|\leq C|t-s|, $$
thus the sequences $(\rho^{\s}_{\pm})_{\s}$ are uniformly equicontinuous. Moreover, \eqref{erv3} and \eqref{erv4} imply the uniform boundedness of these sequences on $[\delta, T]$. Therefore, there exist subsequences $\rho^{\s_k}_{\pm}$ converge uniformly on $[\delta, T]$ to $v_{\pm}$, respectively. Notice that $v_{\pm}\in C(0,\infty)$.

On the other hand, by \eqref{erv3}, we claim that for any $t>0$, 
$$\dlim_{\s\to 0}\big|\rho^{\s}_-(t)-u^{\s}(0,t)\big|=0.$$

Indeed, since $u^{\s}(0,\cdot)$ is continuous at $t>0$ then there exists $\delta'>0$ depending on $\s$ and $t$ such that $\big|u^{\s}(0,s)-u^{\s}(0,t)\big|<\s$ for any $s\in [t-\delta', t]$. Therefore, our claim follows from the following estimate
\begin{align*}
&\big|\rho^{\s}_-(t)-u^{\s}(0,t)\big|\\
\leq& e^{-\s^{-1}t}\big|v_{0,\pm}-u^{\s}(0,t)\big|+\dint_0^t\s^{-1}e^{-\s^{-1}(t-s)}\big|u^{\s}(0,s)-u^{\s}(0,t)\big|\,ds\\
\leq& e^{-\s^{-1}t}+\dint_0^{t-\delta'}\s^{-1}e^{-\s^{-1}(t-s)}\,ds+\s\dint_{t-\delta'}^t\s^{-1}e^{-\s^{-1}(t-s)}\,ds.
\end{align*}

Analogously, it can be obtained from \eqref{erv4} that for any $t>0$, 
$$\dlim_{\s\to 0}\big|\rho^{\s}_+(t)-u^{\s}(1,t)\big|=0,$$
then the subsequences $u^{\s_k}(0,\cdot), u^{\s_k}(1,\cdot)$ converge pointwise on $(0,\infty)$ to $v_{\mp}$, respectively.

Hence, for any $(r,t)\in (0,1)\times (0,\infty)$, the corresponding subsequence $u^{\s_k}$ converges to $u$ given by
\begin{align}\label{u1}
 u(r,t)=&\dint_0^1u_0(r')[\Theta(r-r',t)-\Theta(r+r',t)]\,dr'\notag\\
&-\dint_0^t\dfrac{\partial \Theta}{\partial r}(r,t-s)v_-(s)\,ds+\dint_0^t\dfrac{\partial \Theta}{\partial r}(r-1,t-s)v_+(s)\,ds.
\end{align}

One can easily checked that $u\in C^{2,1}((0,1)\times (0,\infty))$. As already shown in Chapter 6 of \cite{C}, the limit $u$ solves the linear heat equation with boundary values $v_{\pm}$ and the initial datum $u_0$. Moreover, we observe that
\begin{align*}
\dlim_{t\downarrow 0}v_{\pm}(t)&=\dlim_{t\downarrow 0}\dlim_{k\to\infty}\rho^{\s_k}_{\pm}(t)=\dlim_{k\to\infty}\dlim_{t\downarrow 0}\rho^{\s_k}_{\pm}(t)\\
&=\dlim_{k\to\infty}\rho^{\s_k}_{\pm}(0)=\dlim_{k\to\infty}v^{\s_k}_{0,\pm}=v_{0,\pm}.
\end{align*}

Taking the limit of both sides of \eqref{bc1} and \eqref{bc2} along the subsequences $\rho^{\s_k}_{\pm}$ with our choice of the sequences $u^{\s}_0, v^{\s}_{0,\pm}$ yields
\begin{equation}\label{lubc}
\begin{cases}
v_-(t)&=v_{0,-}+\dlim_{l\to 0}\dlim_{t_0\downarrow 0}\dint_{t_0}^t \dfrac{1}{2}u_r(l,s)\,ds\\
v_+(t)&=v_{0,+}-\dlim_{l\to 1}\dlim_{t_0\downarrow 0}\dint_{t_0}^t \dfrac{1}{2}u_r(l,s)\,ds.
\end{cases}
\end{equation}

More precisely, $u$ satisfies \eqref{lu1} with $v_{\pm}(t)$ such that $\dlim_{t\downarrow 0}v_{\pm}(t) = v_{0,\pm}$ and the boundary conditions \eqref{lubc}.

So far we have just identified the limit of the sequence $u^{\s}$ up to a subsequence. Let us now consider other subsequences $\rho^{\s_m}_{\pm}$ such that they converge uniformly to $\hat{v}_{\pm}$, respectively. Then the corresponding limit $\hat{u}$ of the subsequence $u^{\s_m}$ can be given by the same expression as in \eqref{u1}, where $v_{\pm}$ are replaced by $\hat{v}_{\pm}$. Applying the same argument as before, we deduce that $\hat{u}$ also solves the problem \eqref{lu1} with the boundary conditions \eqref{lubc}, where we replace $v_{\pm}$ by $\hat{v}_{\pm}$. 

We denote $\bar{v}_{\pm}=v_{\pm}-\hat{v}_{\pm}$. The boundary conditions \eqref{lubc} imply that
\begin{align}
v_-(t)=v_{0,-}+&\dlim_{l\to 0}\dlim_{t_0\downarrow 0}\dint_{t_0}^t\dfrac{1}{2}\dint_0^1u_0(r')\bigg[\dfrac{\partial\Theta}{\partial r}(l-r',s)-\dfrac{\partial\Theta}{\partial r}(l+r',s)\bigg]dr'ds\notag\\
&-\dint_0^t\Theta(0,t-s)v_-(s)\,ds+\dint_0^t\Theta(1,t-s)v_+(s)\,ds,\label{first}
\end{align}
and similarly,
\begin{align}
v_+(t)=v_{0,+}-&\dlim_{l\to 1}\dlim_{t_0\downarrow 0}\dint_{t_0}^t\dfrac{1}{2}\dint_0^1u_0(r')\bigg[\dfrac{\partial\Theta}{\partial r}(l-r',s)-\dfrac{\partial\Theta}{\partial r}(l+r',s)\bigg]dr'ds\notag\\
&+\dint_0^t\Theta(1,t-s)v_-(s)\,ds-\dint_0^t\Theta(0,t-s)v_+(s)\,ds.\label{second}
\end{align}

This implies
$$\bar{v}_-(t)=-\dint_0^t\Theta(0,t-s)\bar{v}_-(s)\,ds+\dint_0^t\Theta(1,t-s)\bar{v}_+(s)\,ds,$$
and similarly,
$$ \bar{v}_+(t)= \dint_0^t\Theta(1,t-s)\bar{v}_-(s)\,ds-\dint_0^t\Theta(0,t-s)\bar{v}_+(s)\,ds. $$

By setting $\mathbf{V}(t)=\bar{v}_-(t)+\bar{v}_+(t)$, the two above expressions allow us to attain that
$$ \mathbf{V}(t)=\dint_0^t\big(\Theta(1,t-s)-\Theta(0,t-s)\big)\mathbf{V}(s)\,ds. $$

This gives us $\mathbf{V}(t)=0, \forall t> 0$, by applying Gronwall's inequality. Since for any $t>0$, $\bar{v}_{\pm}(t)\in[0,1]$, then $v_{\pm}(t)=\hat{v}_{\pm}(t)$ and $u(r,t)=\hat{u}(r,t)$ for all $(r,t)\in [0,1]\times (0,\infty)$. This leads us to the uniqueness of the solution to the problelm \eqref{lu1} with the boundary conditions \eqref{lubc}. 

Hence, we have verified that $u$ is the pointwise limit of the sequence $u^{\s}$ for $(r,t)\in (0,1)\times (0,\infty)$ as $\s$ goes to $0$.

Moreover, in Section 3.6, \cite{N}, it can be shown that $v_{\pm}\in C^1(0,\infty)$. Thus $u\in C^{2,1}([0,1]\times (0,\infty))$ and now one can rewrite the boundary conditions \eqref{lubc} in the strong form \eqref{sbc}. It completes the proof of Theorem \ref{bvp}.
\end{proof}
\section{Sticky Brownian motion}
\subsection{Sticky Brownian motion as a strong limit of a sequence of rescaled sticky random walks}
Sticky random walk $(X(t))_{t\geq 0}$ moving on $[0,N+1]\cap \mathbb{N}$ is a continuous time random walk with jump rates $c(x,x\pm 1)=\dfrac{1}{2}, \forall x\in [1,N]\cap \mathbb{N}$ and $c(0,1)=c(N+1,N)=\dfrac{1}{2N}$.

Let $Y$ be a simple symmetric random walk on $\mathbb{Z}$ starting from $x$. Recall that the sequence of rescaled random walks $N^{-1}Y(N^2t)$ converges uniformly almost surely on compact intervals of $[0,\infty)$ to a Brownian motion $B, B_0=r\in [0,1]$, defined on some rich enough common probability space $(\tilde{\Omega}, \mathcal{F}, P)$, see \cite{K}.

We denote by $Y^{\rm rf}$ the simple random walk $Y$ reflected at $0$ and $N+1$. Let us call
  \begin{equation*}
 \mathbf{T}(0,N+1;t;Y^{\rm rf}) = \int_0^t \big( \mathbf 1_{Y^{\rm rf}(s)=0}+ \mathbf 1_{Y^{\rm rf}(s)=N+1}\big)\,ds
 \end{equation*}
 the local time spent by $Y^{\rm rf}$ at $0$ and $N+1$. Then it is shown in \cite{N} that the sticky random walk $X$ can be realized by setting
 \begin{equation}\label{realize}
X\Big(t+ (2N-1)\mathbf{T}(0,N+1;t;Y^{\rm rf})\Big)=Y^{\rm rf}(t).
 \end{equation}
\begin{thm}\label{loti}
$(2N-1)N^{-2} \mathbf{T}(0,N+1;N^2t;Y^{\rm rf})$ converges uniformly almost surely on compact intervals of $[0,\infty)$ to $L_t$ which is the local time at $0$ and $1$ of the reflecting Brownian motion $B^{\rm rf}$ on $[0,1]$. Moreover, the rescaled sticky random walk $N^{-1} X(N^2t)$ converges uniformly almost surely on compact intervals of $[0,\infty)$ to the sticky Brownian motion $B^{\rm st}$ on $[0,1]$ defined as
  \begin{equation}
 \label{2.15}
B^{\rm st} (t+ L_t )= B^{\rm rf}(t).
 \end{equation}
\end{thm}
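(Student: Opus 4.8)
The plan is to obtain both assertions from the assumed almost sure uniform convergence $N^{-1}Y(N^2\cdot)\to B$ on compact intervals, by pushing it through two continuous operations: the two-sided Skorokhod reflection of a path into $[0,1]$, and a time change by a continuous nondecreasing additive functional. The logical skeleton is: (i) the rescaled reflected walk converges to reflecting Brownian motion; (ii) the rescaled boundary occupation time converges to the Brownian boundary local time $L_t$; and (iii) since both $X$ and $B^{\rm st}$ are time changes of the reflected processes by (a multiple of) this occupation time, the sticky convergence follows by composing the convergences in (i) and (ii).

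For (i), let $\Gamma$ be the Skorokhod map sending a path to its reflection in $[0,1]$; on compact intervals $\Gamma$ is Lipschitz for the uniform norm (two-sided Skorokhod problem). The discrete reflected walk satisfies $N^{-1}Y^{\rm rf}(N^2\cdot)=\Gamma\big(N^{-1}Y(N^2\cdot)\big)$ up to a boundary discretization error of order $N^{-1}$, so the Lipschitz property together with $N^{-1}Y(N^2\cdot)\to B$ yields $N^{-1}Y^{\rm rf}(N^2\cdot)\to\Gamma(B)=B^{\rm rf}$ uniformly a.s. on compacts. Along with $B^{\rm rf}$, the map $\Gamma$ also produces the boundary regulator, which is the vehicle for the local time.

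Step (ii) is the main obstacle, namely $(2N-1)N^{-2}\mathbf{T}(0,N+1;N^2\cdot;Y^{\rm rf})\to L_t$. A short computation of the holding-time structure shows that $Y^{\rm rf}$ leaves each boundary site at a fixed rate (equal to $1$ in the present normalization), so each visit contributes a mean occupation time of order one and one unit to the discrete reflection regulator produced by $\Gamma$; thus $\mathbf{T}$ is, up to a fluctuation that is negligible after rescaling, proportional to that regulator. I would therefore invoke the Lipschitz continuity of the regulator part of $\Gamma$ (convergence of the rescaled regulators to the Brownian boundary local time, in the spirit of a local-time invariance principle for the reflected walk), and separately a law-of-large-numbers estimate comparing, on each boundary excursion, the accumulated occupation time with the number of reflections; combining the two identifies the limit as $L_t$ with the factor $(2N-1)N^{-2}$ being exactly the normalization that converts the boundary occupation of $Y^{\rm rf}$ into the Brownian local time of $B^{\rm rf}$. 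This is where the precise choice of $(2N-1)$ and the scaling $N^{-2}$ must be matched and where the fluctuation control (e.g.\ a maximal inequality for the martingale part of $\mathbf{T}$ relative to the regulator) is required.

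Finally, to pass from (ii) to the sticky limit, set the rescaled time-change function
\[
\bar A_N(t)=t+(2N-1)N^{-2}\mathbf{T}(0,N+1;N^2t;Y^{\rm rf}),\qquad A(t)=t+L_t.
\]
The identity \eqref{realize} gives, after rescaling, $N^{-1}X(N^2 s)=\big[N^{-1}Y^{\rm rf}(N^2\cdot)\big]\big(\bar A_N^{-1}(s)\big)$, while \eqref{2.15} gives $B^{\rm st}(s)=B^{\rm rf}\big(A^{-1}(s)\big)$. Each $\bar A_N$ is continuous and strictly increasing with $\bar A_N\to A$ uniformly on compacts by Step (ii), and $A$ is continuous and strictly increasing (because of the $+t$ term); hence $\bar A_N^{-1}\to A^{-1}$ uniformly on compacts. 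Combining this with the uniform convergence $N^{-1}Y^{\rm rf}(N^2\cdot)\to B^{\rm rf}$ from Step (i), and the elementary fact that $f_N\circ g_N\to f\circ g$ uniformly on compacts whenever $f_N\to f$ and $g_N\to g$ uniformly with $f$ continuous, yields $N^{-1}X(N^2\cdot)\to B^{\rm rf}\circ A^{-1}=B^{\rm st}$ uniformly a.s.\ on compacts, as claimed.
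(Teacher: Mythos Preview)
Your three–step plan is sound and your step (iii) via inversion of the strictly increasing time changes $\bar A_N$ and $A$ is carried out more explicitly than the paper, which for that part simply invokes the arguments of \cite{A}. The genuine divergence from the paper is in steps (i)–(ii). You work directly with the two-sided Skorokhod map $\Gamma$ and its regulator, exploiting their Lipschitz continuity on $C[0,T]$ to push the invariance principle for $N^{-1}Y(N^2\cdot)$ through to the reflected path and its boundary local time. The paper instead \emph{unfolds} the reflected walk: visits of $Y^{\rm rf}$ to $\{0,N+1\}$ are exactly visits of the unreflected walk $Y$ to the lattice $\{m(N+1):m\in\mathbb{Z}\}$, and writing $Y=S_{\mathcal N(\cdot)}$ one gets $\mathbf{T}=\sum_{k\le \mu_{\mathcal N(N^2t)}}G_k$ with i.i.d.\ $\mathrm{Exp}(1)$ holding times $G_k$ and visit counts $\mu$. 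A Borel--Cantelli/Doob/Markov argument replaces the sum by $\mu$, and then Amir's local-time invariance principle for random walks gives $\tfrac{2}{N}\mu^{(m(N+1))}_{[N^2t]}\to 2L^m_t(B)$; summing over $m$ and using the folding identity $2\sum_m L^m_t(B)=L^0_t(B^{\rm rf})+L^1_t(B^{\rm rf})$ finishes. Your approach is more structural and avoids the lift to $\mathbb{Z}$, but two points deserve care: first, the reflected walk is obtained from $Y$ by the \emph{folding} map $\mathbf{R}_{0;1}$ (cf.\ \eqref{rfl}) rather than the Skorokhod map $\tilde{\mathbf{R}}_{0;1}$, and although for nearest-neighbor walks these coincide pathwise, your ``up to $O(N^{-1})$'' claim should be justified; second, the Skorokhod regulator counts pushes, not occupation time, so the LLN step you allude to (replacing accumulated exponential holding times by the visit count, uniformly in $t$) is exactly the content of the paper's Proposition and is the one place where your outline needs a genuine maximal-inequality argument rather than a pointwise LLN.
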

\begin{proof}

We know that the continuous time random walk $Y$ can be defined by $Y(t)=S_{\mathcal{N}(t)}$, where $S$ is a simple symmetric discrete time random walk and $\mathcal{N}$ is a Poisson process of parameter $1$. Let us denote by $\mu_k^{(m)}$ the number of visits to $m\in\mathbb{Z}$ in the first $k$ steps of the random walk $S$. Then for $\mu_k:=\dsum_{m\in\mathbb{Z}}\mu_k^{(m(N+1))}$, we can write
\begin{align*}
\dfrac{2N-1}{N^2}\mathbf{T}(0,N+1;N^2t;Y^{\rm rf})=&\dfrac{2N-1}{N^2}\dint_0^{N^2t}\big(\mathbf{1}_{Y^{\rm rf}(s)=0}+\mathbf{1}_{Y^{\rm rf}(s)=N+1}\big)\,ds\\
=&\dfrac{2N-1}{N^2}\dint_0^{N^2t}\mathbf{1}_{(N+1)^{-1}Y(s)\in\mathbb{Z}}\,ds\\
=&\dfrac{2N-1}{N^2}\dsum_{k=1}^{\mu_{\mathcal{N}(N^2t)}} G_k,
\end{align*}
where $G_k$ are independent exponential random variables with parameter $1$. We next verify the following result.
\begin{prop}
For any $T>0$ and $m\in\mathbb{Z}$,
$$ \sup_{t\in[0,T]}\bigg|\dfrac{2N-1}{N^2}\dsum_{k=1}^{\mu_{\mathcal{N}(N^2t)}}G_k-\dfrac{2}{N}\mu_{\mathcal{N}(N^2t)}\bigg|\overset{a.s.}\longrightarrow 0. $$
\end{prop}
\begin{proof}
As a consequence of the Borel-Cantelli lemma, it is enough to verify that for any $T>0$ and $\varepsilon>0$,
\begin{equation}\label{GN}
\dsum_{N\rightarrow\infty}P\bigg(\sup_{t\in[0,T]}\bigg|\dfrac{2N-1}{N^2}\dsum_{k=1}^{\mu^{(m(N+1))}_{\mathcal{N}(N^2t)}}G_k-\dfrac{2}{N}\mu^{(m(N+1))}_{\mathcal{N}(N^2t)}\bigg|>2\varepsilon\bigg)<\infty.
\end{equation}

This follows from applying Doob's martingale inequality and Markov's inequality.
\end{proof}

On the other hand, making use of the same arguments as in \cite{A} leads us to the fact that for any $T>0$ and $m\in\mathbb{Z}$,
$$ \sup_{t\in[0,T]}\bigg|\dfrac{2}{N}\mu_{[N^2t]}^{(m(N+1))}-2L_t^m(B)\bigg|\overset{a.s.}\longrightarrow 0, $$
where $L_t^m(B)$ stands for the local time at $m$ of the Brownian motion $B$. It follows that
$$ \sup_{t\in[0,T]}\bigg|\dfrac{2}{N}\mu_{\mathcal{N}(N^2t)}-2\dsum_{m\in\mathbb{Z}}L_t^m(B)\bigg|\overset{a.s.}\longrightarrow 0. $$

Combining with the above proposition, we can conclude that
$$ \sup_{t\in[0,T]}\bigg|\dfrac{2N-1}{N^2}\dsum_{k=1}^{\mu_{\mathcal{N}(N^2t)}}G_k-2\dsum_{m\in\mathbb{Z}}L_t^m(B)\bigg|\overset{a.s.}\longrightarrow 0. $$

Since it can be checked that
$$ 2\dsum_{m\in\mathbb{Z}}L_t^m(B)=2\dsum_{m\in\mathbb{Z}}L_t^{2m}(B)+2\dsum_{m\in\mathbb{Z}}L_t^{2m+1}(B)=L_t^0(B^{\rm rf})+L_t^1(B^{\rm rf})=L_t, $$
then for any $T>0$, this implies almost surely that 
$$\dlim_{N\to\infty}\sup_{t\in [0,T]}\bigg|\dfrac{2N-1}{N^2}\mathbf{T}(0,N+1;N^2t;Y^{\rm rf})-L_t\bigg|=0.$$

For any $T>0$, applying again the same arguments as in \cite{A} yields
$$ P(\dlim_{N\to\infty}\sup_{t\in [0,T]}\big|N^{-1}X(N^2t)-B^{\rm {st}}(t)|=0)=1. $$
\end{proof}
\subsection{Sticky Brownian motion as a solution to a stochastic differential equation}
The sticky Brownian motion on the bounded interval $[0,1]$ also solves a stochastic differential equation similar to the one on the half line $[0,\infty)$ as considered in \cite{EP}.
\begin{prop}\label{RWY}
The sticky Brownian motion $B^{\rm st}, B^{\rm st}(0)=r$, defined in \eqref{2.15} and the unique solution $\mathbf{B}$ to the following stochastic differential equation
\begin{equation}\label{SDE}
\begin{cases}
d\mathbf{B}(t)&=\mathbf{1}_{0<\mathbf{B}(t)<1}\,dW(t)+\dfrac{1}{2}\,\mathbf{1}_{\mathbf{B}(t)=0}\,dt-\dfrac{1}{2}\,\mathbf{1}_{\mathbf{B}(t)=1}\,dt,\\
\mathbf{B}(0)&=r,
\end{cases}
\end{equation}
for some standard Brownian motion $W$ and $r\in [0,1]$, have the same law.
\end{prop}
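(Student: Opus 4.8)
The plan is to prove the statement by verifying directly that the time-changed process $B^{\rm st}$ of \eqref{2.15} is itself a weak solution of the stochastic differential equation \eqref{SDE}, and then invoking uniqueness in law for \eqref{SDE} to conclude that $B^{\rm st}$ and $\mathbf{B}$ have the same law. First I would record the semimartingale structure of the reflecting Brownian motion $B^{\rm rf}$ on $[0,1]$. By the Tanaka/Skorokhod decomposition one has $B^{\rm rf}(t) = r + \beta_t + \frac12\ell^0_t - \frac12\ell^1_t$ for a standard Brownian motion $\beta$, where the boundary local times $\ell^0,\ell^1$ increase only on $\{t:B^{\rm rf}(t)=0\}$ and $\{t:B^{\rm rf}(t)=1\}$ respectively and are normalized consistently with the process $L$ of Theorem \ref{loti} (so that $\ell^0+\ell^1$ is the clock increment $L$). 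Getting this normalization right, so that the push coefficients combine with the clock $A(t)=t+L_t$ to reproduce exactly the factor $\frac12$ appearing in \eqref{SDE}, is one of the delicate points; the target constant is pinned down independently by the Wentzell boundary condition $\frac12 f''(0)=\frac12 f'(0)$, $\frac12 f''(1)=-\frac12 f'(1)$ satisfied on the generator domain of the Introduction.

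Next I would carry out the time change. Setting $A(t)=t+L_t$, which is continuous and strictly increasing, and $\tau=A^{-1}$, we have $B^{\rm st}(s)=B^{\rm rf}(\tau(s))$ and, from $A(\tau(s))=s$, the relation $d\tau(s)=ds-dL_{\tau(s)}$. Since $A(t)-A(s)\ge t-s$, the inverse $\tau$ is $1$-Lipschitz, hence absolutely continuous, and because $L_{\tau(\cdot)}$ increases only while $B^{\rm st}\in\{0,1\}$ one obtains the key identity $d\tau(s)=\mathbf 1_{0<B^{\rm st}(s)<1}\,ds$. Splitting the decomposition $ds=d\tau(s)+d\ell^0_{\tau(s)}+d\ell^1_{\tau(s)}$ according to the value of $B^{\rm st}(s)$ then yields the boundary occupation identities $\ell^0_{\tau(s)}=\int_0^s\mathbf 1_{B^{\rm st}(u)=0}\,du$ and $\ell^1_{\tau(s)}=\int_0^s\mathbf 1_{B^{\rm st}(u)=1}\,du$.

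Applying these to the time-changed decomposition $B^{\rm st}(s)=r+\beta_{\tau(s)}+\frac12\ell^0_{\tau(s)}-\frac12\ell^1_{\tau(s)}$, the bounded-variation part becomes exactly $\frac12\int_0^s\mathbf 1_{B^{\rm st}=0}\,du-\frac12\int_0^s\mathbf 1_{B^{\rm st}=1}\,du$, i.e.\ the drift of \eqref{SDE}. For the martingale part $M_s:=\beta_{\tau(s)}$, the time-change theorem for continuous local martingales gives $\langle M\rangle_s=\tau(s)=\int_0^s\mathbf 1_{0<B^{\rm st}(u)<1}\,du$. Enlarging the probability space by an independent Brownian motion $\tilde\beta$ and setting $W_s:=\int_0^s\mathbf 1_{0<B^{\rm st}<1}\,dM_u+\int_0^s\mathbf 1_{B^{\rm st}\in\{0,1\}}\,d\tilde\beta_u$ produces, by Lévy's characterization, a standard Brownian motion $W$ for which $\int_0^s\mathbf 1_{0<B^{\rm st}<1}\,dW=M_s$. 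Hence $B^{\rm st}$ satisfies \eqref{SDE} driven by $W$. To finish, I would establish weak uniqueness for \eqref{SDE} by adapting the half-line treatment of \cite{EP} to the bounded interval: one checks that any solution accumulates zero Lebesgue time at each boundary in the interior clock yet positive real time there, that the associated submartingale (Wentzell) problem is well posed, and that the two boundaries can be handled symmetrically; combined with the weak existence just produced, this gives that $\mathbf B$ and $B^{\rm st}$ have the same law.

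I expect the main obstacle to be the rigorous justification of the time-change identities — in particular proving $d\tau(s)=\mathbf 1_{0<B^{\rm st}(s)<1}\,ds$ together with the occupation formulas for $\ell^0_{\tau(s)}$ and $\ell^1_{\tau(s)}$, and carrying the local-time normalization through so that the constant $\frac12$ in \eqref{SDE} emerges correctly rather than a spurious factor of $2$. A secondary difficulty is the transfer of the uniqueness-in-law argument of \cite{EP} from the half-line $[0,\infty)$ to the bounded interval $[0,1]$, where both sticky endpoints must be treated simultaneously.
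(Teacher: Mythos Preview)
Your proposal is correct but proceeds in the opposite direction from the paper. The paper starts from the SDE solution $\mathbf{B}$: it first shows via the It\^o--Tanaka formula that \eqref{SDE} is equivalent to a system in which $dL_t^0(\mathbf{B})=\mathbf{1}_{\mathbf{B}(t)=0}\,dt$ and $dL_t^{1-}(\mathbf{B})=\mathbf{1}_{\mathbf{B}(t)=1}\,dt$, then time-changes $\mathbf{B}$ by the inverse $\kappa$ of the interior occupation clock $K(t)=\int_0^t\mathbf{1}_{0<\mathbf{B}<1}\,ds$ to obtain a process $V$ solving the two-sided Skorokhod problem driven by a Brownian motion $Q$ (via L\'evy's characterization), identifies $V$ in law with the reflected Brownian motion $\mathbf{R}_{0;1}(Q)$ through the explicit double-Skorokhod-map formula, and finally reads off $\kappa(t)=t+L_t^0(V)+L_t^{1-}(V)$, which matches the defining relation \eqref{2.15}. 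You instead take $B^{\rm st}$ as given by \eqref{2.15}, push the Skorokhod decomposition of $B^{\rm rf}$ through the inverse clock $\tau=A^{-1}$, and exhibit $B^{\rm st}$ directly as a weak solution of \eqref{SDE}. Your route is more streamlined---it avoids the explicit Skorokhod-map representation and the intermediate identification in law---at the cost of the occupation/normalization identities you correctly flag as the delicate step; the paper's route makes the local-time relations $dL_t^0(\mathbf{B})=\mathbf{1}_{\mathbf{B}=0}\,dt$ and $dL_t^{1-}(\mathbf{B})=\mathbf{1}_{\mathbf{B}=1}\,dt$ explicit, which is what drives its Step~4. One simplification for you: the paper disposes of weak uniqueness for \eqref{SDE} by citing Schmidt \cite{S} directly, so you need not adapt \cite{EP} to the bounded interval.
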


Besides the approach mentioned in Remark \ref{resc}, the above proposition gives us another way to attain the probabilistic representation of the unique solution to the problelm \eqref{lu1} with the boundary conditions \eqref{sbc}.
\begin{thm}\label{Ito}
The unique solution $u$ to the boundary value problelm \eqref{lu1}, \eqref{sbc} can be represented probabilistically, for $r\in[0,1], t>0$, as
\begin{equation}\label{eua}
u(r,t)=E_r[u_0(\mathbf{B}(t))\mathbf{1}_{0<\mathbf{B}(t)<1}+v_{0,-}\mathbf{1}_{\mathbf{B}(t)=0}+v_{0,+}\mathbf{1}_{\mathbf{B}(t)=1}], 
\end{equation}
where $\mathbf{B}$ solves the stochastic differential equation \eqref{SDE}.
\end{thm}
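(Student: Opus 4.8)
The plan is to prove the identity by applying Itô's formula to the sticky Brownian motion $\mathbf{B}$ and exploiting that $u$ is smooth up to the spatial boundary. Fix $r\in[0,1]$ and $t>0$, and for $\delta\in(0,t)$ consider the process $M_s:=u(\mathbf{B}(s),t-s)$ on $s\in[0,t-\delta]$. By Theorem \ref{bvp} we have $u\in C^{2,1}([0,1]\times(0,\infty))$, so $u,u_r,u_{rr},u_t$ are continuous and bounded on the compact set $[0,1]\times[\delta,t]$; this is exactly the regularity required to apply Itô's formula to a continuous semimartingale, and it is here that the up-to-the-boundary smoothness provided by Theorem \ref{bvp} is essential, since $\mathbf{B}$ spends positive time at $0$ and $1$.

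First I would compute the Itô differential. From the SDE \eqref{SDE} the martingale part of $\mathbf{B}$ is $\int_0^\cdot \mathbf{1}_{0<\mathbf{B}(s)<1}\,dW(s)$, so $d\langle\mathbf{B}\rangle_s=\mathbf{1}_{0<\mathbf{B}(s)<1}\,ds$, and
\begin{align*}
dM_s=\ & u_r(\mathbf{B}(s),t-s)\,\mathbf{1}_{0<\mathbf{B}(s)<1}\,dW(s)\\
& +\Big[\tfrac12 u_{rr}\,\mathbf{1}_{0<\mathbf{B}<1}-u_t+\tfrac12 u_r\,\mathbf{1}_{\mathbf{B}=0}-\tfrac12 u_r\,\mathbf{1}_{\mathbf{B}=1}\Big](\mathbf{B}(s),t-s)\,ds.
\end{align*}
The drift vanishes identically: on $\{0<\mathbf{B}<1\}$ it equals $-u_t+\tfrac12 u_{rr}=0$ by the heat equation in \eqref{lu1}; on $\{\mathbf{B}=0\}$ it equals $\tfrac12 u_r(0,t-s)-\partial_\tau u(0,t-s)=\tfrac12 u_r(0,t-s)-v_-'(t-s)=0$ by \eqref{sbc}; and on $\{\mathbf{B}=1\}$ it equals $-\tfrac12 u_r(1,t-s)-v_+'(t-s)=0$, again by \eqref{sbc}. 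Hence $M$ is a martingale on $[0,t-\delta]$ (the $dW$ integrand is bounded), and taking expectations gives $u(r,t)=E_r\big[u(\mathbf{B}(t-\delta),\delta)\big]$ for every $\delta\in(0,t)$.

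The second and decisive step is to let $\delta\downarrow 0$. Writing $g(x)=u_0(x)\mathbf{1}_{0<x<1}+v_{0,-}\mathbf{1}_{x=0}+v_{0,+}\mathbf{1}_{x=1}$, I would split the expectation according to whether $\mathbf{B}(t-\delta)$ lies in the interior or at a boundary point. At the boundary one has the exact identity $E_r[u(\mathbf{B}(t-\delta),\delta)\mathbf{1}_{\mathbf{B}(t-\delta)=0}]=v_-(\delta)\,P_r(\mathbf{B}(t-\delta)=0)$ (and symmetrically at $1$), with $v_\pm(\delta)\to v_{0,\pm}$ by \eqref{sbc}. On the interior I would use that $u(\cdot,\delta)\to u_0$ uniformly on compact subsets of $(0,1)$, together with path continuity of $\mathbf{B}$ and bounded convergence. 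The leftover contribution from a shrinking boundary layer $\mathbf{B}(t-\delta)\in(0,\eta]\cup[1-\eta,1)$ is bounded by $P_r(\mathbf{B}(t-\delta)\in(0,\eta])$, which I would send to zero by first letting $\delta\downarrow0$ and then $\eta\downarrow0$. Assembling the pieces yields $u(r,t)=E_r[g(\mathbf{B}(t))]$, which is \eqref{eua}.

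The main obstacle is precisely this passage to the limit at the corners $(0,0)$ and $(1,0)$, where $g$ is discontinuous and $u$ admits no continuous extension (in general $u_0(0^+)\neq v_{0,-}$). The delicate point is that $u(\cdot,\delta)$ converges to $v_{0,-}$ only at the single point $x=0$, and not uniformly on a neighbourhood of it, so one cannot merely pass the limit through. Resolving this requires two quantitative inputs on the law of the sticky motion: the continuity in $t$ of the boundary atoms $P_r(\mathbf{B}(t)=0)$ and $P_r(\mathbf{B}(t)=1)$, and the fact that no mass accumulates near the boundary without being pinned to it, i.e. $P_r(\mathbf{B}(t)\in(0,\eta])\to0$ as $\eta\downarrow0$. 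Both follow from the description of $\mathbf{B}$ as the time change \eqref{2.15} of reflecting Brownian motion (identified with the solution of \eqref{SDE} via Proposition \ref{RWY}), whose occupation of a boundary layer is controlled by its boundary local time.
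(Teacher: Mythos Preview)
Your proof is correct and follows the same route as the paper's: apply It\^o's formula to $u(\mathbf{B}(s),t-s)$ on $[0,t-\delta]$, use the heat equation and the boundary conditions \eqref{sbc} to cancel the drift, deduce $u(r,t)=E_r[u(\mathbf{B}(t-\delta),\delta)]$, and let $\delta\downarrow 0$. Your handling of the final limit is in fact more careful than the paper's, which simply writes $\lim_{\delta\downarrow 0}E_r[u(\mathbf{B}(t_0-\delta),\delta)]=\lim_{\sigma\downarrow 0}E_r[u(\mathbf{B}(t_0),\sigma)]$, splits according to the value of $\mathbf{B}(t_0)$, and passes the limit inside without addressing the corner issue you flag.
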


Proposition \ref{RWY} and Theorem \ref{Ito} are verified in the next sections.
\subsection{Proof of Proposition \ref{RWY}}
The existence and uniqueness in law of the solution to the stochastic differential equation \eqref{SDE} are proved in \cite{S}.

Therefore, it suffices to verify that the sticky Brownian motion $B^{\rm st}$ and the unique solution $\mathbf{B}$ have the same law. The idea to show this is looking for a Skorokhod problem that a suitable time change of the process $\mathbf{B}$ and the reflecting Brownian motion satisfy. More precisely, the proof consists of the following steps.

\begin{description}
\item[Step 1.] First, we show that the stochastic differential equation \eqref{SDE} is equivalent to the following system
\begin{equation}\label{SDE1}
\begin{cases}
d\mathbf{B}(t)&=\mathbf{1}_{0<\mathbf{B}(t)<1}\,dW(t)+\dfrac{1}{2}\,dL_t^0(\mathbf{B})-\dfrac{1}{2}\,dL_t^{1-}(\mathbf{B}),\\
dL_t^0(\mathbf{B})&=\mathbf{1}_{\mathbf{B}(t)=0}\,dt\\
dL_t^{1-}(\mathbf{B})&=\mathbf{1}_{\mathbf{B}(t)=1}\,dt\\
\mathbf{B}(0)&=r,
\end{cases}
\end{equation}
where $L_t^a(\mathbf{B})$ stands for the local time of $\mathbf{B}$ at $a$.

It is obvious that the system \eqref{SDE1} implies \eqref{SDE}. For the converse, we remark that $0\leq \mathbf{B}(t)\leq 1$ almost surely for any $t\geq 0$ if $\mathbf{B}$ is a solution of \eqref{SDE}. This follows from using the Ito - Tanaka formula (see Theorem 1.2, \cite{RY}), namely
\begin{align*}
\mathbf{B}(t)^-&=-\dint_0^t\mathbf{1}_{\mathbf{B}(s)<0}\,d\mathbf{B}(s)+\dfrac{1}{2}L_t^{0-}(\mathbf{B})=0,\\
(\mathbf{B}(t)-1)^+&=\dint_0^t\mathbf{1}_{\mathbf{B}(s)>1}\,d\mathbf{B}(s)+\dfrac{1}{2}L_t^1(\mathbf{B})=0,
\end{align*}
where we have used that
\begin{align*}
L_t^{0-}(\mathbf{B})&=\dlim_{\varepsilon\downarrow 0}\dfrac{1}{\varepsilon}\dint_0^t\mathbf{1}_{-\varepsilon\leq \mathbf{B}(s)<0}\,d[\mathbf{B}]_s\\
&=\dlim_{\varepsilon\downarrow 0}\dfrac{1}{\varepsilon}\dint_0^t\mathbf{1}_{-\varepsilon\leq \mathbf{B}(s)<0}\,\mathbf{1}_{0<\mathbf{B}(s)<1}\,ds=0,\\
L_t^1(\mathbf{B})&=\dlim_{\varepsilon\downarrow 0}\dfrac{1}{\varepsilon}\dint_0^t\mathbf{1}_{1\leq \mathbf{B}(s)<1+\varepsilon}\,d[\mathbf{B}]_s\\
&=\dlim_{\varepsilon\downarrow 0}\dfrac{1}{\varepsilon}\dint_0^t\mathbf{1}_{1\leq \mathbf{B}(s)<1+\varepsilon}\,\mathbf{1}_{0<\mathbf{B}(s)<1}\,ds=0.
\end{align*}

In view of this remark and using again the Ito - Tanaka formula, we get
\begin{align*}
\mathbf{B}(t)=\mathbf{B}(t)^+&=r+\dint_0^t\mathbf{1}_{\mathbf{B}(s)>0}\,d\mathbf{B}(s)+\dfrac{1}{2}L_t^0(\mathbf{B})\\
&=r+\dint_0^t\mathbf{1}_{0<\mathbf{B}(s)<1}\,dW(s)-\dint_0^t\dfrac{1}{2}\,\mathbf{1}_{\mathbf{B}(s)=1}\,ds+\dfrac{1}{2}L_t^0(\mathbf{B})
\end{align*}
and this implies $dL_t^0(\mathbf{B})=\mathbf{1}_{\mathbf{B}(t)=0}\,dt$.

Similarly, we have
\begin{align*}
-(\mathbf{B}(t)-1)&=(\mathbf{B}(t)-1)^-=-(r-1)-\dint_0^t\mathbf{1}_{\mathbf{B}(s)<1}\,d\mathbf{B}(s)+\dfrac{1}{2}L_t^{1-}(\mathbf{B})\\
&=-(r-1)-\dint_0^t\mathbf{1}_{0<\mathbf{B}(s)<1}\,dW(s)-\dint_0^t\dfrac{1}{2}\,\mathbf{1}_{\mathbf{B}(s)=0}\,ds+\dfrac{1}{2}L_t^{1-}(\mathbf{B})
\end{align*}
and this implies $dL_t^{1-}(\mathbf{B})=\mathbf{1}_{\mathbf{B}(t)=1}\,dt$. Hence, we obtain the equivalence of \eqref{SDE} and \eqref{SDE1}.
\item[Step 2.] Next, let us denote
$$ K(t)=\dint_0^t\mathbf{1}_{0<\mathbf{B}(s)<1}\,ds, \,\,\,\,\,\,\,\,\, \kappa(t)=\inf\{u\geq 0: K(u)>t\}. $$

Applying the time change $\kappa$ to the first equation of  \eqref{SDE1} yields
\begin{align*}
 V(t):=\mathbf{B}(\kappa(t))&=r+\dint_0^{\kappa(t)}\mathbf{1}_{0<\mathbf{B}(s)<1}\,dW(s)+\dfrac{1}{2}\,L_{\kappa(t)}^0(\mathbf{B})-\dfrac{1}{2}\,L_{\kappa(t)}^{1-}(\mathbf{B})\\
&=r+\dint_0^{\kappa(t)}\mathbf{1}_{0<\mathbf{B}(s)<1}\,dW(s)+\dfrac{1}{2}\,L_t^0( V)-\dfrac{1}{2}\,L_t^{1-}(V).
\end{align*}

Since $Q(t):=r+\dint_0^{\kappa(t)}\mathbf{1}_{0<\mathbf{B}(s)<1}\,dW(s)=r+\dint_0^t\mathbf{1}_{0<\mathbf{B}(\kappa(u))<1}\,dW(\kappa(u))$ is a continuous local martingale and note that
$$[Q]_t=\dint_0^t\mathbf{1}_{0<\mathbf{B}(\kappa(u))<1}\,d{\kappa(u)}=\dint_0^{\kappa(t)}\mathbf{1}_{0<\mathbf{B}(s)<1}\,ds=t,$$
then $Q$ is a Brownian motion starting from $r$ by P. Levy's characterization theorem. Moreover, due to the explicit expression of the solution to the Skorokhod problem
$$ V(t)=Q(t)+\dfrac{1}{2}\,L_t^0( V)-\dfrac{1}{2}\,L_t^{1-}(V), $$
(see, e.g., \cite{KLRS}, \cite{AM}, \cite{AAGP}), we obtain
\begin{equation}\label{f}
V(t)=\tilde{\mathbf{R}}_{0;1}(Q(t)), 
\end{equation}
where
$$ \tilde{\mathbf{R}}_{0;1}(Q(t)):=Q(t)-\big[(r-1)^+\wedge\inf_{u\in [0,t]}Q(u)\big]\vee\sup_{s\in[0,t]}\big[(Q(s)-1)\wedge\inf_{u\in [s,t]}Q(u)\big].$$
\item[Step 3.] Let us denote the Brownian motion $Q$ reflected at $0$ and $1$ by
\begin{equation}\label{rfl}
\mathbf{R}_{0;1}(Q(t)):=\dsum_{m\in\mathbb{Z}}|Q(t)-2m|\,\mathbf{1}_{|Q(t)-2m|\leq 1}.
\end{equation}

From \cite{G}, there is an explicit representation of $Z:=\mathbf{R}_{0;1}(Q)$, the Brownian motion $Q$ starting from $r$ reflected at two barriers $0$ and $1$, as follows
\begin{align*}
Z(t)=r+&\dint_0^t\big(\mathbf{1}_{Q(s)\in\underset{m\in\mathbb{Z}}\bigcup\,(2m,2m+1)}-\mathbf{1}_{Q(s)\in\underset{m\in\mathbb{Z}}\bigcup\,(2m+1,2m+2)}\big)\,dQ(s)\\
&+\dsum_{m\in\mathbb{Z}}L_t^{2m}(Q)-\dsum_{m\in\mathbb{Z}}L_t^{2m+1}(Q).
\end{align*}

Set $\hat{Q}(t):=r+\dint_0^t\big(\mathbf{1}_{\{Q(s)\in\underset{m\in\mathbb{Z}}\bigcup\,(2m,2m+1)\}}-\mathbf{1}_{\{Q(s)\in\underset{m\in\mathbb{Z}}\bigcup\,(2m+1,2m+2)\}}\big)\,dQ(s)$. We notice that $\hat{Q}$ is a Brownian motion starting from $r$ by P. Levy's characterization theorem. Moreover,
\begin{equation}\label{loc}
\dsum_{m\in\mathbb{Z}}L_t^{2m}(Q)=\dfrac{1}{2}L_t^0(Z)\,\,\,\text{ and }\,\,\,\dsum_{m\in\mathbb{Z}}L_t^{2m+1}(Q)=\dfrac{1}{2}L_t^{1-}(Z).
\end{equation}

Hence, we can write
$$ Z(t)=\hat{Q}(t)+ \dfrac{1}{2}L_t^0(Z)-\dfrac{1}{2}L_t^{1-}(Z).$$

Using again the explicit representation of the solution to the above Skorokhod problem gives us
\begin{equation}\label{g}
Z(t)=\tilde{\mathbf{R}}_{0;1}(\hat{Q}(t))\overset{d}=\tilde{\mathbf{R}}_{0;1}(Q(t)). 
\end{equation}
It follows from \eqref{f} and \eqref{g} that
\begin{equation}\label{Y}
 V(t)\overset{d}=Z(t)=\mathbf{R}_{0;1}(Q(t)).
\end{equation}
\item[Step 4.] Moreover, using the second and the third equation of the system \eqref{SDE1} gives us
\begin{align*}
\kappa(t)&=\dint_0^{\kappa(t)}\mathbf{1}_{0<\mathbf{B}(s)<1}\,ds+\dint_0^{\kappa(t)}\mathbf{1}_{\mathbf{B}(s)=0}\,ds+\dint_0^{\kappa(t)}\mathbf{1}_{\mathbf{B}(s)=1}\,ds\\
&=t+L_{\kappa(t)}^0(\mathbf{B})+L_{\kappa(t)}^{1-}(\mathbf{B})\\
&=t+L_t^0( V)+L_t^{1-}( V)\\
&\overset{d}=t+L_t^0(Z)+L_t^{1-}(Z).
\end{align*}

Then in view of \eqref{Y}, we deduce that 
$$Z(t)\overset{d}=\mathbf{B}(t+L_t^0(Z)+L_t^{1-}(Z)).$$

Since $Z(t)\overset{d}=B^{\rm rf}(t)$, the proof is complete.
\end{description}
\subsection{Proof of Theorem \ref{Ito}}
For any $\delta>0$, we fix $t_0\geq \delta$. Since the unique solution $u\in C^{2,1}([0,1]\times (0,\infty))$, we apply Ito's formula to the function $u(\mathbf{B}(t), t_0-t)$ for $t\in [0,t_0-\delta]$ and obtain that
\begin{align*}
&u(\mathbf{B}(t),t_0-t)\\
=&u(r,t_0)+\dint_0^tu_r(\mathbf{B}(s),t_0-s)\,d\mathbf{B}(s)+\dint_0^t\dfrac{1}{2}u_{rr}(\mathbf{B}(s),t_0-s)\mathbf{1}_{0<\mathbf{B}(s)<1}\,ds\\
&\hskip7.3cm+\dint_0^tu_s(\mathbf{B}(s),t_0-s)\,ds\\
=&u(r,t_0)+\dint_0^tu_r(\mathbf{B}(s),t_0-s)\mathbf{1}_{0<\mathbf{B}(s)<1}\,dW(s)\\
&\hskip2.5cm+\dint_0^t\Big[\dfrac{1}{2}u_r(\mathbf{B}(s),t_0-s)+\dfrac{d}{ds}v_-(t_0-s)\Big]\mathbf{1}_{\mathbf{B}(s)=0}\,ds\\
&\hskip2.5cm+\dint_0^t\Big[-\dfrac{1}{2}u_r(\mathbf{B}(s),t_0-s)+\dfrac{d}{ds}v_+(t_0-s)\Big]\mathbf{1}_{\mathbf{B}(s)=1}\,ds\\
=&u(r,t_0)+\dint_0^tu_r(\mathbf{B}(s),t_0-s)\mathbf{1}_{0<\mathbf{B}(s)<1}\,dW(s).
\end{align*}

Let us call $M(t):=\dint_0^tu_r(\mathbf{B}(s),t_0-s)\mathbf{1}_{0<\mathbf{B}(s)<1}\,dW(s)$. Then $M$ is a martingale since $u\in C^{2,1}([0,1]\times (0,\infty))$. So $E_r[M(t_0-\delta)]=E_r[M(0)]=0$. Hence,
$$u(r,t_0)=E_r[u(\mathbf{B}(t_0-\delta),\delta)].$$

Taking the limit $\delta\downarrow 0$ of both sides of the above equality gives us
\begin{align*}
u(r,t_0)=&\dlim_{\sigma\downarrow 0}E_r[u(\mathbf{B}(t_0),\sigma)]\\
=&\dlim_{\sigma\downarrow 0}E_r[u(\mathbf{B}(t_0),\sigma)\mathbf{1}_{0<\mathbf{B}(t_0)<1}+u(0,\sigma)\mathbf{1}_{\mathbf{B}(t_0)=0}+u(1,\sigma)\mathbf{1}_{\mathbf{B}(t_0)=1}]\\
=&E_r[u_0(\mathbf{B}(t_0))\mathbf{1}_{0<\mathbf{B}(t_0)<1}+v_{0,-}\mathbf{1}_{\mathbf{B}(t_0)=0}+v_{0,+}\mathbf{1}_{\mathbf{B}(t_0)=1}].
\end{align*}

Since $\delta>0$ is arbitrary, we obtain the representation \eqref{eua} for any $t>0$.
\vskip0.5cm
{\bf Acknowledgements.} I would like to express my sincere gratitude to Prof. Errico Presutti for his great ideas which help me a lot to complete this paper.
\bibliographystyle{amsalpha}

\begin{thebibliography}{99}
\bibitem[1]{A} 
Amir, M.:
Sticky Brownian motion as the strong limit of a sequence of random walks. Stochastic Processes and their Applications {\bf 39(2)}, 221-237 (1991)

\bibitem[2]{AAGP}
Andersen, L. N., Asmussen, S., Glynn, P. W., Pihlsgard, M.:
Levy processes with two-sided reflection. Levy Matters V : Functionals of Levy Processes, Lecture Notes in Mathematics, Springer International Publishing, Vol. {\bf 2149}, 67-182 (2015)

\bibitem[3]{AM}
Andersen, L.N., Mandjes, M.:
Structural properties of reflected Levy processes. Queueing Systems {\bf 63}, 301-322 (2009)

\bibitem[4]{C}
Cannon, J. R.:
The One-dimensional heat equation. Addison-Wesley Publishing Company, (1984)

\bibitem[5]{EP}
Engelbert, H. J., Peskir, G.:
Stochastic differential equations for sticky Brownian motion. Stochastics: An International Journal of Probability and Stochastic Processes,
Vol. {\bf 86}, No. 6, 993-1021, DOI: 10.1080/17442508.2014.899600, (2014)

\bibitem[6]{G}
Grigoriu, M.: 
Solution of solid mechanics problems with Neumann boundary conditions by the Ito calculus and simulation. Proceedings third international conference on Computational Stochastic Mechanics, A.A. Balkema, Rotterdam, 3-10 (1999)

\bibitem[7]{Kt}
Knight, F.B.:
Essentials of Brownian motion and diffusion, Math. Surveys I8 (1981)

\bibitem[8]{K}
Knight, F.B.:
On the random walk and Brownian motion. Trans. Amer. Math. Sot. {\bf 103} , 725-731 (1961)

\bibitem[9]{KLRS}
Kruk, L., Lehocky, J., Ramanan, K., Shreve, S.:
Double Skorokhod map and reneging real-time queues. Markov Processes and Related Topics: a Festschrift for Thomas G. Kurtz, Inst. Math. Stat. Collect. {\bf 4} , 169-193 (2008)

\bibitem[10]{N}
Nguyen, T. D. T.:
Fick law and sticky Brownian motions, to be submitted.

\bibitem[11]{PS}
Pang, H., Stroock, D. W.:
A peculiar two point boundary value problem. The Annals of Probability, Vol. {\bf 35}, No. 5, 1623-1641, DOI: 10.1214/009117906000000818 (2007)

\bibitem[12]{RY}
Revuz, D., Yor, M.:
Continuous martingales and Brownian motion., Springer, (1999)

\bibitem[13]{S}
Schmidt, W.:
On stochastic differential equations with reflecting barriers. Mathematische Nachrichten {\bf 142}, 135-148 (1989)

\bibitem[14]{SW}
Stroock, D. W., Williams, D.: 
A simple PDE and Wiener–Hopf Riccati equations. Comm. Pure Appl. Math. {\bf 58}, 1116–1148 (2005)
\end{thebibliography}

\vskip 0.3cm
Thu Dang Thien Nguyen\\
             Gran Sasso Science Institute, Viale Francesco Crispi, 7, L'Aquila  67100, Italy\\
			 Department of Mathematics, University of Quynhon, Quy Nhon, Vietnam\\
             Email: thu.nguyen@gssi.it         

\end{document}